\documentclass[12pt]{article}
\usepackage{latexsym, amsmath, amsfonts, amsthm, amssymb}
\usepackage{times}
\usepackage{a4wide}
\usepackage{mathrsfs}

\def \rank {{ \rm rank }}
\def \Vol {{ \rm Vol }}

\def \RR {\mathbb R}

\def \EE {\mathbb E}

\def \ZZ {\mathbb Z}

\def \PP {\mathbb P}

\def \eps {\varepsilon}
\def \vphi {\varphi}

\def \cE {\mathcal E}
\def \cF {\mathcal F}

\def \cA {\mathcal A}

\newtheorem{theorem}{Theorem}[section]

\newtheorem{lemma}[theorem]{Lemma}

\newtheorem{proposition}[theorem]{Proposition}

\theoremstyle{definition}
\newtheorem{remark}[theorem]{Remark}

\def\myffrac#1#2 in #3{\raise 2.6pt\hbox{$#3 #1$}\mkern-1.5mu\raise 0.8pt\hbox{$
		#3/$}\mkern-1.1mu\lower 1.5pt\hbox{$#3 #2$}}

\def\qed{\hfill $\vcenter{\hrule height .3mm
		\hbox {\vrule width .3mm height 2.1mm \kern 2mm \vrule width .3mm
			height 2.1mm} \hrule height .3mm}$ \bigskip}

\begin{document}

\title{Poisson approximation of random lattices}
\date{}
\author{Boaz Klartag}
\maketitle

\abstract{
Fix a subset $S \subset \RR^n$ of volume at most $c n$
that satisfies $S \cap (-S) = \emptyset$. We consider two point processes in $S$: the first is
the Poisson point process of intensity one, and the second is the restriction of
a random lattice  to $S$, where the random lattice is distributed uniformly in the space
of covolume-one lattices.
We  show  that
the total variation distance between these two point processes is
 at most $C e^{-c' n}$, where $c, C, c' > 0$ are universal constants.
 }

\section{Introduction}
\label{sec1}

We revisit the connection between random lattices of covolume one
and the Poisson point process of intensity one in high dimensions.
A {\it simple point process} in $\RR^n$ is a probability distribution over
the space of locally finite subsets of $\RR^n$. We consider two simple point processes: 
\begin{enumerate}
	\item The first is the Poisson point process of intensity one in $\RR^n$. Thus $P \subseteq \RR^n$ is a random countable subset, such that for any fixed subset $S \subseteq \RR^n$ with $0 < \Vol_n(S) < \infty$, the random variable
	$$ |P \cap S| $$
	is a Poisson random variable with parameter $\lambda = \Vol_n(S)$, i.e., $\PP( |P \cap S|  = k ) = e^{-\lambda} \lambda^k / k!$. Here $|X| = \#(X)$ is the cardinality of the set $X$.
	
\medskip An important property  is that the random variables $|P \cap A_1|,\ldots, |P \cap A_N|$ are independent random variables for any pairwise disjoint sets $A_1,\ldots, A_N \subseteq \RR^n$ of finite volume.  Moreover, for $m \geq 1$, when conditioning on the event that $|P \cap S| = m$, the random subset $P \cap S$ has the same distribution as the set $\{X_1,\ldots,X_m \}$,
where $X_1,\ldots,X_m$ are independent random vectors, all distributed uniformly in $S$.
 We refer the reader e.g. to Last and Penrose \cite{LP} for information about Poisson processes.
	
\item A lattice of full rank in $\RR^n$ is a set of the form $T(\ZZ^n)$ where $T: \RR^n \rightarrow \RR^n$ is an invertible linear map.
The covolume of the lattice is $|\det(T)|$.
	Write $\mathscr{X}_n$ for the space of all  lattices of covolume one in $\RR^n$.
	The space $\mathscr{X}_n$ is a homogeneous space under the action of the group $SL_n(\RR)$, where the action of $g \in SL_n(\RR)$ on the lattice $L \subseteq \RR^n$
is the lattice $$ g.L = \{ g(x) \, ; \, x \in L \}. $$	
	Minkowski and Siegel \cite{siegel} discovered that there is a unique Haar	{\it probability} measure on $\mathscr{X}_n$ which is invariant under the action of $SL_n(\RR)$.
    Let $L$ be a random lattice of covolume one, distributed uniformly in $\mathscr{X}_n$, i.e., its law is the Haar probability measure.
	The second simple point process that we consider is the random countable subset $L \subseteq \RR^n$. For more information on random lattices we refer the reader e.g. to
Gruber and Lekkerkerker \cite[Section 19.3]{GL} or to Marklof \cite{marklof} and references therein.
\end{enumerate}

A priori, a lattice in $\RR^n$ is a structured object that has little to do with the Poisson process whose points do not influence each other. This intuition is indeed
correct in low dimensions. However, works by
Rogers\footnote{In \cite{rogers3}, Rogers thanks Mr. W. M.  Gorman from the Department of Economics for remarking that Rogers' earlier results about random lattices suggested that the asymptotic distribution might be of the Poisson type.} \cite{rogers3} and Schmidt \cite{schmidt} from the 1950s  suggest that, quite unexpectedly,
in high dimensions the two point processes mentioned above are similar to each other, at least when both are restricted to a subset $S \subseteq \RR^n$ with $S \cap (-S) = \emptyset$
and of volume  at most $c n$. Here $c > 0$ is a universal constant.
The works by Holm \cite{Holm}, Kim \cite{Kim0, Kim} and  S\"odergren \cite{sodergren, sodergren1} described below
analyze several formulations and consequences of this effect. In this paper we propose an additional formulation of the same phenomenon.

\medskip Let $S \subseteq \RR^n$ be a fixed set of finite, non-zero volume.
We endow the collection of all finite subsets of $S$, denoted by $\cF(S)$, with
the minimal topology with respect to which $A \mapsto \sum_{x \in A} \vphi(x)$
is a continuous functional on $\cF(S)$ for any compactly-supported, continuous function
$\vphi: \RR^n \rightarrow \RR$. Observe that the laws
of $P \cap S$ and of $L \cap S$ are Borel probability measures supported on $\cF(S)$.
For a non-empty finite subset $X \subseteq \RR^n$ we write
\begin{equation}  \iota(X) = \left \{ \begin{array}{cl} 1 & \textrm{the vectors in } X  \textrm{ are linearly independent} \\ 0 & \textrm{otherwise} \end{array} \right. \label{eq_1727} \end{equation}
We also set $\iota(\emptyset) = 1$ and note that $\iota(\{ 0 \}) = 0$.
A basic relation
between the Poisson process and the lattice process is the following identity,
whose straightforward proof using Siegel-type summation formulae is given in Section \ref{sec2}.

\begin{proposition} Let $S \subseteq \RR^n$ be a measurable set with $S \cap (-S) = \emptyset$ and $0 < \Vol_n(S) < \infty$. Let $f: \cF(S) \rightarrow \RR$ be a bounded, measurable
	function such that $f(X) = 0$ if $|X| \geq n$. Then,
	\begin{equation}  \EE \sum_{X \subseteq L \cap S}
		f (X) \cdot \iota(X) =
		\EE \sum_{X \subseteq S \cap P }
		f (X),
		\label{eq_1819} \end{equation}
	where the sum runs over all subsets $X$ of $L \cap S$ on the left-hand side, and of $S \cap P$ on the right-hand side. \label{prop_1832}
\end{proposition}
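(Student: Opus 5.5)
Decompose both sides of (\ref{eq_1819}) according to the cardinality $k = |X|$, with $0 \le k \le n-1$ since $f$ vanishes on sets of size $\ge n$. For $k=0$ both sides equal $f(\emptyset)$, because $\iota(\emptyset)=1$ and the empty set is counted exactly once on each side. For $1 \le k \le n-1$ we prove the identity separately for each $k$, namely
$$ \EE \sum_{X \subseteq L\cap S,\ |X|=k} f(X)\iota(X) = \EE \sum_{X \subseteq P \cap S,\ |X| = k} f(X). $$
Both sides rewrite as integrals over ordered $k$-tuples. Set $F_k(x_1,\ldots,x_k) = f(\{x_1,\ldots,x_k\}) 1_{S^k}(x_1,\ldots,x_k)$ when the $x_i$ are distinct, and $0$ otherwise. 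Each unordered $k$-subset corresponds to $k!$ ordered tuples of distinct points, so each side equals $\frac{1}{k!}$ times the corresponding sum over ordered tuples.

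The Poisson side is handled by the multivariate Mecke formula (the $k$-th factorial moment measure of a Poisson process of intensity one is Lebesgue measure on $(\RR^n)^k$). It gives
$$ \EE \sum_{(x_1,\ldots,x_k) \in P^k \text{ distinct}} F_k = \int_{(\RR^n)^k} F_k(x_1,\ldots,x_k)\,dx_1\cdots dx_k. $$
Equivalently, one can condition on $|P\cap S| = m$ and use the i.i.d.\ uniform description recalled in the introduction. Then $\EE\, m(m-1)\cdots(m-k+1) = \Vol_n(S)^k$, which yields the same result.

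The lattice side is handled by Rogers' mean value formula, the $k$-fold Siegel formula. For $k<n$ it states that
$$ \EE \sum_{\substack{x_1,\ldots,x_k \in L \\ \text{lin.\ indep.}}} \rho(x_1,\ldots,x_k) = \int_{(\RR^n)^k} \rho $$
for bounded measurable $\rho$ of bounded support. This support condition holds here, since $F_k$ is bounded and $\Vol(S)<\infty$; if needed we approximate by truncation, or apply the formula first to $|F_k|$ and then use monotone convergence. The factor $\iota(X)$ restricts the lattice sum to exactly the linearly independent tuples, so the lattice side also equals $\frac1{k!}\int F_k$. This matches the Poisson side, and summing over $k$ gives the proposition.

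The main obstacle, and the place where the hypothesis $S\cap(-S)=\emptyset$ enters, is that Rogers' full formula is not just the term over linearly independent tuples. It equals $\int\rho$ plus correction terms from linearly dependent tuples (rational relations), and the primitive-vector version of Siegel carries factors of $\zeta$. By inserting $\iota(X)$ we apply only the linearly independent part, for which the identity is exact. That part is obtained by unfolding over $SL_n(\ZZ)$-orbits of $k$-frames: linearly independent $k$-tuples in $\ZZ^n$ are exactly the images of such frames, and each orbit contributes Haar measure. Distinctness requires care. Two distinct linearly independent points are never $\pm$ each other, and a set $X$ with $0\in X$ or $x,-x \in X$ has $\iota(X)=0$, so these cases contribute nothing on the lattice side. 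On the Poisson side, the event that a point and its negative both appear has probability zero. The disjointness $S\cap(-S)=\emptyset$ is what ensures that each linearly independent configuration of lattice points in $S$ is counted once and not twice via the symmetry $x\mapsto -x$. Beyond this bookkeeping, all the analytic content rests on citing Rogers' formula for the independent part.
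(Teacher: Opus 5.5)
Your proof is correct and follows essentially the paper's route: split by cardinality $k\le n-1$, apply Rogers' formula for linearly independent $k$-tuples on the lattice side, compute the Poisson side via the factorial moments (the paper conditions on $|P\cap S|=m$, which you also mention as an alternative to Mecke), and divide by $k!$. One correction to your last paragraph: the hypothesis $S\cap(-S)=\emptyset$ plays no role in this proposition, and no double counting under $x\mapsto -x$ arises. Rogers' formula for independent tuples holds for any integrable function on $(\RR^n)^k$, so the identity holds verbatim even for a symmetric ball. That hypothesis only becomes essential in Theorem~\ref{thm1}.
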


The {\it total variation distance} between the laws of the two random sets $P \cap S$ and $L \cap S$ is defined as
\begin{equation}  d_{TV}(P \cap S, L \cap S) =  \sup_{\cA \subseteq \cF(S)} |\PP (P \cap S \in \cA)
	- \PP(L \cap S \in \cA)| \label{eq_1132} \end{equation}
where the supremum runs over all events $\cA$, i.e., all Borel subsets $\cA \subseteq \cF(S)$. In view of Proposition \ref{prop_1832}, linearly dependent tuples are the main obstacle in showing that the two point processes $L \cap S$ and $P \cap S$ are close to each other. By combining Proposition \ref{prop_1832}
with a variant of Schmidt's sieve  \cite{schmidt} and results by Rogers \cite{rogers2, rogers3} we establish the following:

\begin{theorem} Let $S \subseteq \RR^n$ be a measurable set with $S \cap (-S) = \emptyset$ and $0 < \Vol_n(S) < c n$. Then,
\begin{equation} d_{TV}(L \cap S, P \cap S) \leq C e^{-c' n},
\label{eq_1230} \end{equation}
	where $c, c', C > 0$ are  universal constants.
\label{thm1}	
\end{theorem}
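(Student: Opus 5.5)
We compare the two point processes through their Janossy densities, i.e. the laws of $|L\cap S|$ and $|P\cap S|$ together with the conditional configurations. On the Poisson side, for every $k$ and every measurable $\cA_k$ of $k$-point subsets,
\[ \PP(|P\cap S|=k,\ P\cap S\in\cA_k)=\frac{e^{-\lambda}}{k!}\int_{S^k}1_{\cA_k}(\{x_1,\ldots,x_k\})\,dx, \qquad \lambda=\Vol_n(S). \]
On the lattice side, inclusion--exclusion (the Schmidt sieve) writes $1_{\{L\cap S = X\}}$ as the alternating sum $\sum_{Y\supseteq X,\,Y\subseteq L\cap S}(-1)^{|Y|-|X|}$. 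Hence $\PP(L\cap S\in \cA_k)$ equals an alternating sum of expressions $\EE\sum_{Y\subseteq L\cap S} g(Y)$ with $g(Y)=\sum_{X\subseteq Y,\,X\in\cA_k}(-1)^{|Y|-k}$. The same identity holds for $P$. We truncate the sieve at level $m\approx n/2$: Bonferroni inequalities show that the truncated sums bracket the true probability, and the error is bounded by the $m$-th term.

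Step 1: for $|Y|<n$ we apply Proposition~\ref{prop_1832} to $f=g$ restricted to subsets of size $<n$. This equates the lattice sum weighted by $\iota(Y)$ with the Poisson sum exactly. We must control two errors. The first is the contribution of linearly dependent subsets $Y\subseteq L\cap S$ with $|Y|\le m$. The second is the tail of the sieve beyond level $m$ for both processes.

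Step 2, the tail: for Poisson this is $\EE\binom{|P\cap S|}{m}=\lambda^m/m!$, which is $\le e^{-c'n}$ when $\lambda\le cn$ and $m\sim n/2$ with $c$ small. For the lattice, the tail is bounded by $\EE\binom{|L\cap S|}{m}$. We split it into the independent part, which equals the Poisson value by Proposition~\ref{prop_1832}, and the dependent part of Step 3. The condition $S\cap(-S)=\emptyset$ is what makes the independent count match. Summing the Bonferroni errors over $\cA$ gives the total variation bound after taking sup over events, since the truncated expressions are linear in $1_{\cA}$ with bounded total coefficient mass. Concretely, we bound $\sum_k$ of the errors using $\sum_{j\le m}\binom{j}{k}$-type combinatorics, which contribute at most $2^m$-type factors that get absorbed by choosing $c$ small.

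Step 3, the main obstacle: bound $\EE\#\{Y\subseteq L\cap S:\ |Y|\le m,\ \iota(Y)=0\}$ by $Ce^{-c'n}$. For this we invoke Rogers' mean value formula for $k$-tuples of lattice points, $k<n$ \cite{rogers2,rogers3}. It expresses $\EE\sum f(x_1,\ldots,x_k)$ over lattice tuples as the independent integral plus a sum over rational matrices $D$ describing dependencies $x_{i}=\sum_j (d_{ij}/q)x_j$. Rogers' estimates show that, for sets of volume $\le cn$ and $k\le n/2$ or so, the total contribution of the dependent terms is exponentially small. Terms with $x_i=\pm x_j$ are excluded: $x_i=-x_j$ is impossible because $S\cap(-S)=\emptyset$, and $x_i = x_j$ is excluded because the points are distinct. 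Terms with larger denominators decay like $q^{-n}$ times volume factors. I expect the careful bookkeeping here, uniform in $k\le m$ including the combinatorial growth $\lambda^k/k!$ versus the number of dependency patterns, to be the hardest part. I would follow Rogers' bounds directly, choosing $m=\lfloor n/3\rfloor$ if the $n/2$ range is too delicate.
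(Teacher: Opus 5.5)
Your Step 3 is not a matter of careful bookkeeping. It is the entire difficulty, and the sources you cite do not supply it.

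Because you truncate the ordinary inclusion--exclusion, the lattice side involves every dependent subset of $L\cap S$ of size up to $m\approx n/3$, with arbitrary rank deficiency. Likewise, your lattice tail in Step 2 is the binomial moment $\EE\binom{|L\cap S|}{m}$, of order proportional to $n$. The estimates actually available are:
\begin{itemize}
\item Proposition \ref{prop_1832}, which is exact but concerns independent sets only;
\item Schmidt's bound (\ref{eq_1451}), valid for all $\ell\le n-1$ but only for rank deficiency one;
\item Rogers' moment estimates, which control the full sum over dependency patterns only when $k$ is small compared with $n$ (roughly $k\lesssim\sqrt n$).
\end{itemize}
Rogers' range is limited because the number of integer patterns with deficiency $\delta$ among $k$ points is already at least of order $3^{\delta(k-\delta)}$, which is super-exponential in $k$. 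Note also that patterns with $q=1$, such as $\{x,y,x+y\}$, are not damped by denominators at all; they are small only through a geometric factor like $(3/4)^{n/2}$. So your account of why dependent terms are negligible is incomplete.

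Bounding $\sum_{j\le \ell-2}\EE\rho^j_\ell(L\cap S)$ uniformly for $\ell\le m$, with $m$ a fixed fraction of $n$, is exactly the kind of high-moment control that the closing remark of the paper, part (ii), flags as not currently available. You also cannot bypass it with Proposition \ref{prop_1445}. On the exceptional event of probability $Ce^{-cn}$ the set $L\cap S$ can be arbitrarily large, so the Bonferroni terms there are unbounded, and you need their expectation, not just the probability of the event.

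The missing idea is Schmidt's sieve (Proposition \ref{prop_1725}, via Lemma \ref{lem_1444}). It is a pointwise inclusion--exclusion inequality, valid for every finite $B$. Its level-$r$ sum runs only over sets that are linearly independent for one parity of $r$ and have rank deficiency at most one for the other. The parities are arranged so that deficiency-one sets enter each one-sided bound only with the favourable sign.

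The paper applies the upper sieve to $B=L\cap S$, combined with $1_{\cA_k}(B)\le 1_{\cE^c}(B)+\vphi(B)1_{\{|B|=k\}}$. Then:
\begin{itemize}
\item independent sets are transferred exactly to $P$ by Proposition \ref{prop_1832};
\item deficiency-one sets are bounded crudely by $\sum_{\ell\le m}\binom{\ell}{k}\EE\rho^{\ell-1}_\ell(L\cap S)\le Ce^{-cn}$ using (\ref{eq_1451});
\item the exceptional event costs only $\PP(L\cap S\notin\cE)\le Ce^{-cn}$;
\item the only truncation tail is the Poisson one, $\binom{m}{k}\lambda^m/m!$.
\end{itemize}
This gives a one-sided bound for each $k\le n/10$. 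The total variation estimate then follows by passing to complements and using the cardinality part of Proposition \ref{prop_1445} for $|L\cap S|>n/10$. No moment of $|L\cap S|$ of order comparable to $n$, and no rank deficiency $\ge 2$, is ever needed. Without this device, or a genuinely new estimate covering all rank deficiencies for $k$ proportional to $n$, your argument does not close.
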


Thus, for any property of finite subsets of $S$, the probability that $L \cap S$
satisfies this property is very close to the probability that $P \cap S$ has the property; the difference
between the two probabilities is at most $C e^{-c' n}$.
Theorem \ref{thm1} is proven in Section \ref{sec4}, while the variant of Schmidt's sieve that
we use is presented in Section~\ref{sec_inc_exc}.

\medskip
Under the assumptions of Theorem \ref{thm1}, Schmidt \cite{schmidt} showed that
the probability that $L \cap S$ is empty is close to the probability that $P \cap S$ is empty, with an additive approximation error of at most $C e^{-c' n}$.
This estimate implies a bound for the sphere packing density in $\RR^n$, and it is also closely
related to the covering density, see Rogers \cite{rogers4}.
More generally, Proposition 3.3 in Kim~\cite{Kim} states that for all $k \leq \tilde{c} n$,
$$ \left| \PP ( |L \cap S| = k ) \, - \, \PP ( |P \cap S| = k ) \right| \leq C e^{-c' n}. $$
Thus, up to the precise values of the universal constants, Theorem \ref{thm1} is formally a generalization of these results.

\medskip We make no effort to optimize the universal constants in Theorem \ref{thm1}. Up to the precise values of these universal constants, the results by S\"odergren \cite{sodergren} and Kim \cite{Kim} about the lengths of the shortest $c n$
vectors in a random lattice may be recovered  from Theorem \ref{thm1}, by using standard analysis of the Poisson process. Indeed, if we let
\begin{equation}  S = \{ x \in \RR^n \, ; \, |x| \leq r_n, x_1 > 0 \}, \label{eq_1102} \end{equation} where $r_n > 0$ is chosen so that $S$ is a half-ball of volume $c n$, then 
almost surely $L \cap S$ contains one lattice vector from each pair $\pm v \in L \setminus \{ 0 \}$ with $|v| \leq r_n$. 
With probability of at least $1 -C e^{-cn}$ there are at least $c' n$ of such lattice vectors, as follows from Theorem \ref{thm1} and standard Poisson tail estimates. Similarly, some
results by S\"odergren \cite{sodergren1}, Kim \cite{Kim} and Holm \cite{Holm} about the angles between the $c n$ shortest vectors, as well as the results regarding the first $c n$ successive minima, should also follow from Theorem \ref{thm1}.

\medskip  The conclusion of Theorem \ref{thm1} is false, in general, when  $\Vol_n(S) \geq n$. Indeed, if $S$ is a half-ball of volume at least $n$  as in (\ref{eq_1102}), then with non-negligible probability, the random set $L \cap S$ contains $n$
vectors such that the matrix whose columns are these vectors has an integer determinant.
The corresponding event for the random set $P \cap S$ occurs with probability zero. Moreover, when
$K \subseteq \RR^n$ is a convex set with the origin in its interior and $S = \{ x \in K \, ; x_1 > 0 \}$, we have\footnote{While discussed as an open problem in \cite{schmidt2}, we note that  (\ref{eq_245}) has been known to experts for a long time.}
\begin{equation}  \EE |L \cap S|^n = +\infty \label{eq_245} \end{equation}
while $\EE |P \cap S|^q < \infty$ for any $q > 0$. To prove (\ref{eq_245}), use
e.g. the bound in Kleinbock and Margulis \cite[Section 7.1]{KM} together with the observation that if $(\eps K) \cap L \neq \{ 0 \}$
then $|L \cap S| \geq c_K /\eps$.

\medskip Our notation is fairly standard. The Euclidean norm of $x = (x_1,\ldots,x_n) \in \RR^n$ is denoted by $|x| = \sqrt{ \sum_i x_i^2}$. We write $\Vol_n$ for $n$-dimensional volume,
and $SL_n(\RR)$ is the group of all $n \times n$ real matrices of determinant one.
All measurable sets and functions are assumed Borel measurable, and an empty sum equals zero.
We write $1_{\cA}(x)$ for the indicator of the set $\cA$, that equals one when $x \in \cA$ and vanishes when $x \not \in \cA$.
We write ${n \choose k}$ for the number of $k$-element subsets of $\{1,\ldots,n\}$,
thus $ { n \choose k } = 0$ when $k > n$.  For a subset $X \subseteq \RR^n$ we write $\rank(X)$ for the dimension of the linear
space spanned by the vectors of $X$.

\medskip 
Unless stated otherwise, we use $C, c, c', C', \tilde{C}, \tilde{c}$
to denote various positive universal constants whose value may change from one line to the next.
All of the universal constants in this paper are effective; that is, we could provide explicit numerical values for them if desired.

\medskip {\it Acknowledgements.} I would like to thank Barak Weiss for his explanations on random lattices and for directing my attention to Schmidt's paper \cite{schmidt}.
Thanks also to Sergey Avvakumov, Seungki Kim, Jens Marklof and Anders S\"odergren for helpful discussions. Supported by a grant
from the Israel Science Foundation (ISF).

\section{Linear independence of the lattice vectors in $S$}
\label{sec2}

Fix a measurable  subset $S \subseteq \RR^n$ with $S \cap (-S) = \emptyset$ and with $0 < \Vol_n(S) < \infty$.
Let $L \subseteq \RR^n$ be a random lattice of covolume one, distributed
uniformly in $\mathscr{X}_n$. Let the random set
$P \subseteq \RR^n$ be the set of atoms of a Poisson point process of intensity one in $\RR^n$. With probability one, both random sets
$$ L \cap S \qquad \text{and} \qquad P \cap S $$
are finite. Recall from (\ref{eq_1727}) that $\iota(X) = 1$ for $X \subseteq \RR^n$
if and only if the vectors in $X$ are linearly independent. Otherwise $\iota(X) = 0$.

\begin{proof}[Proof of Proposition \ref{prop_1832}]
	The sums in (\ref{eq_1819})
	are well-defined since $P \cap S$ and $L \cap S$ are finite sets almost surely.
	Let $\ell=1,\ldots,n-1$ and assume that $f: \cF(S) \rightarrow \RR$ is a bounded, measurable
	function such that $f(A) = 0$ if $|A| \neq \ell$. We will show that under these assumptions,
	both expressions in (\ref{eq_1819}) equal $$ \frac{1}{\ell!} \int_{S^{\ell}} g(x_1,\ldots,x_{\ell}) d x_1 \ldots dx_{\ell} $$
	where $S^{\ell} = S \times \ldots \times S \subseteq (\RR^n)^{\ell}$ and $g: S^{\ell} \rightarrow \RR$
	is defined via $g(x_1,\ldots,x_{\ell}) := f(\{ x_1,\ldots,x_{\ell} \})$
	for distinct $x_1,\ldots,x_{\ell} \in S$. The function $g$ is bounded and hence integrable
	on $S^{\ell}$.
	The fact that
	\begin{equation}  \EE \sum_{\substack{ x_1,\ldots,x_{\ell} \in L \cap S \\ \textrm{linearly independent}}}
		g (x_1,\ldots,x_{\ell}) =  \int_S \ldots \int_S g(x_1,\ldots,x_{\ell}) dx_1\ldots dx_{\ell} \label{eq_1729} \end{equation}
	is proven in Rogers \cite{rogers2} and is mentioned without proof already in Siegel \cite{siegel}. See also Schmidt \cite{schmidt0}.
	One way to prove this fact is to show that the Borel measure
	\begin{equation} \EE \sum_{\substack{x_1,\ldots,x_{\ell} \in L  \\ \textrm{linearly independent}}}
		\delta_{x_1,\ldots,x_{\ell}} \label{eq_2227} \end{equation}
	on $(\RR^n)^{\ell}$ is locally finite and $SL_n(\RR)$-invariant, and therefore a constant multiple of the Lebesgue measure. By considering the measure of large balls, one can prove  that this proportionality constant equals one.

\medskip Let us now analyze the expectation on the right-hand side of (\ref{eq_1819}). Conditioning on the event that
	$|S \cap P| = m$ we know that the set $S \cap P$ is equidistributed with $\{ X_1,\ldots,X_m \}$ where $X_1,X_2,\ldots, X_m$ are independent random vectors distributed uniformly in $S$.
	Therefore, when $m \geq \ell$,
	$$ \EE \left[ \sum_{\substack{x_1,\ldots,x_{\ell} \in S \cap P \\ \textrm{distinct}}}
	g (x_1,\ldots,x_{\ell}) \, \, \Big| \, \, |S \cap P| = m \right]
	= \frac{m!}{(m-\ell)!} \cdot
	\int_{S^{\ell}} g(x_1,\ldots,x_{\ell}) \frac{dx_1 \ldots d x_{\ell}}{\Vol_n(S)^{\ell}}. $$
	Consequently, with $\lambda = \Vol_n(S)$,
	\begin{align} \nonumber \EE \sum_{\substack{x_1,\ldots,x_{\ell} \in S \cap P \\ \textrm{distinct}}}
		g (x_1,\ldots,x_{\ell})  & = \sum_{m = \ell}^{\infty} \frac{e^{-\lambda} \lambda^m}{m!}
		\cdot \frac{m!}{(m-\ell)!} \cdot
		\int_{S^{\ell}} g(x_1,\ldots,x_{\ell}) \frac{dx_1 \ldots d x_{\ell}}{\Vol_n(S)^{\ell}}
		\\ & = \sum_{m = \ell}^{\infty} \frac{e^{-\lambda} \lambda^{m-\ell}}{(m- \ell)!}
		\cdot \int_{S^{\ell}} g(x_1,\ldots,x_{\ell}) dx_1 \ldots d x_{\ell}. \label{eq_1522}
	\end{align}
	The sum in (\ref{eq_1522}) equals one. Therefore, by (\ref{eq_1729}) and (\ref{eq_1522}),
	$$ \EE \sum_{\substack{x_1,\ldots,x_{\ell} \in L \cap S \\ \textrm{linearly independent}}}
	g (x_1,\ldots,x_{\ell}) =
	\EE \sum_{\substack{x_1,\ldots,x_{\ell} \in P \cap S \\ \textrm{distinct}}}
	g (x_1,\ldots,x_{\ell}).
	$$
	Dividing by $\ell!$ we prove (\ref{eq_1819}) in the particular case where
	$f$ satisfies the condition that $f(A) = 0$ when $|A| \neq \ell$. We refer to this condition
	as condition $(\ell)$. For the general case, we note that we may decompose $f = \sum_{\ell=0}^{n-1} f_{\ell}$
	where $f_{\ell}$ is a bounded, measurable function satisfying condition $(\ell)$. We have proved that
	(\ref{eq_1819}) holds true when $f$ is replaced by $f_{\ell}$ for $\ell=1,\ldots,n-1$. The equality (\ref{eq_1819}) holds trivially when $f$
	is replaced by $f_0$. By linearity, we conclude that (\ref{eq_1819}) holds for the function $f$ itself, completing the proof.
\end{proof}

Set $\lambda = \Vol_n(S)$.  As in Schmidt \cite{schmidt}, for a finite set $A \subseteq \RR^n$
let us define $\rho_k^j(A)$ to be the number of
subsets $X \subseteq A$ of cardinality $k$ with $\rank(X) = j$. It follows from  Proposition \ref{prop_1832} that if $0 \leq k \leq n-1$ then
\begin{equation}  \EE \rho_k^k (L \cap S) = \EE \sum_{X \subseteq L \cap S} 1_{\{ \#(X) = k \}} \cdot \iota(X) = \EE \sum_{X \subseteq P \cap S} 1_{\{ \#(X) = k \}} =  \frac{\lambda^k}{k!},
	\label{eq_1450} \end{equation}
where the last passage follows from the fact that $\#(P \cap S)$
is a Poisson random variable with parameter $\lambda$, and from the standard computation  $\sum_{m=k}^{\infty} {m \choose k} \cdot e^{-\lambda} \lambda^m / m!  = \lambda^k / k!$.

\medskip A non-trivial estimate proven in Schmidt \cite[Theorem 3]{schmidt}, relying also on ideas and computations from Rogers \cite{rogers2,rogers3}, states that for $1 \leq k \leq n-1$,
\begin{equation} \EE \rho_k^{k-1} (L \cap S) \leq \frac{\lambda^{k-1}}{(k-1)!} \cdot [3^k (3/4)^{n/2} + 5^k 2^{-n} ]. \label{eq_1451} \end{equation}
The proof of (\ref{eq_1451}) relies on analysis of Siegel-type summation formulae for specific patterns of linear dependence, as well as on delicate counting arguments. 
Observe that in the case where $\lambda \leq c n$ and $k \leq c n$, the right-hand side of (\ref{eq_1451}) is much smaller than that of (\ref{eq_1450}). Thus, in this case, the expected number of linearly independent $k$-tuples in $L \cap S$
greatly exceeds the number of $k$-tuples whose linear rank equals $k-1$. The following proposition was proven by Kim \cite[Proposition 3.1]{Kim}, using the methods of
Rogers and Schmidt.

\begin{proposition}[Kim] Suppose that $\lambda = \Vol_n(S) \leq n/30$. Then
	with probability of at least $1 - C e^{-cn}$, the set $L \cap S$
	consists of at most $n/10$ vectors, and these vectors are linearly independent.
	Here, $c, C > 0$ are universal constants.
	\label{prop_1445}
\end{proposition}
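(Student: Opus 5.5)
The plan is to show that the bad event, where $L\cap S$ is either large or linearly dependent, has small probability. We split it into two pieces and bound each using (\ref{eq_1450}) and Schmidt's estimate (\ref{eq_1451}). Set $m = \lfloor n/10 \rfloor$ and $\lambda = \Vol_n(S) \le n/30$.

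\emph{Linear dependence among small subsets.} If $L\cap S$ is linearly dependent, pick a minimal dependent subset $X\subseteq L\cap S$ and let $k=|X|$. Minimality gives $\rank(X)=k-1$, and $k\ge 2$, since a single vector in $S$ is nonzero: $S\cap(-S)=\emptyset$ forces $0\notin S$. If in addition $|L\cap S|\le m+1$, then $k\le m+1$. Markov's inequality then bounds the probability of this sub-event by
$$ \sum_{k=2}^{m+1} \EE\rho_k^{k-1}(L\cap S) \le \sum_{k=2}^{m+1} \frac{\lambda^{k-1}}{(k-1)!}\left[3^k (3/4)^{n/2}+5^k 2^{-n}\right]. $$
Using $\lambda^{k-1}/(k-1)!\le e^{\lambda}\le e^{n/30}$ and $k\le n/10+1$, the bracket is at most $C\bigl(3^{1/10}(3/4)^{1/2}\bigr)^n + C\bigl(5^{1/10}/2\bigr)^n$. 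Multiplying by $e^{n/30}$, the bases become $e^{1/30}3^{1/10}(3/4)^{1/2}\approx 1.034\cdot1.116\cdot0.866<1$ and $e^{1/30}5^{1/10}/2<1$. Summing over at most $n$ values of $k$ keeps the bound at $Ce^{-cn}$. I expect this numerical check to be the delicate step; if the constants were tight one would bound $\sum_k \lambda^{k-1}3^k/(k-1)! \le 3e^{3\lambda}$ instead, and here $e^{3\lambda}\le e^{n/10}$ together with $(3/4)^{n/2}$ still gives a base $e^{0.1}\sqrt{3/4}\approx 0.957<1$.

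\emph{Too many vectors.} It remains to bound $\PP(|L\cap S|\ge m+1)$. Suppose $|L\cap S|\ge m+1$. Either some $(m+1)$-subset of $L\cap S$ is linearly dependent, or every such subset is independent. In the first case that subset contains a minimal dependent set of size $k\le m+1$, which is the event already handled above. In the second case
$$\rho_{m+1}^{m+1}(L\cap S)\ge 1,$$
and since $m+1\le n-1$, (\ref{eq_1450}) and Markov give probability at most
$$\frac{\lambda^{m+1}}{(m+1)!}\le \left(\frac{e\lambda}{m+1}\right)^{m+1}\le (e/3)^{n/10},$$
which is exponentially small because $e<3$ and $\lambda/(m+1)\le (n/30)/(n/10)=1/3$.

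Combining the two bounds by a union bound gives probability $\ge 1-Ce^{-cn}$ that $L\cap S$ has at most $n/10$ vectors, all linearly independent.
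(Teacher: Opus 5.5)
Your proof is correct and is essentially the paper's argument. The paper's event $\{d_k(L\cap S)>0 \text{ for some } k\le n/10\}$ is exactly your event that some $j$-subset with $j\le \lfloor n/10\rfloor+1$ has rank $j-1$; you reach it more directly through minimal dependent sets, which avoids the harmless factor $k+1$. The cardinality bound is the same Markov estimate on $\rho_\ell^\ell$ with $\ell\approx n/10$, with the same tight base $e^{1/30}3^{1/10}(3/4)^{1/2}\approx 0.9994$. One small point of phrasing: your first Markov bound actually controls the whole event $\bigcup_{2\le k\le m+1}\{\rho_k^{k-1}(L\cap S)\ge 1\}$, not only the sub-event $|L\cap S|\le m+1$, and your second step relies on this larger event. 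The small values of $n$ with $m+1>n-1$ are absorbed into $C$.
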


\begin{proof}[Another proof of Proposition \ref{prop_1445} using Schmidt's results] Write
	$d_k(L \cap S)$ for the number of subsets of $L \cap S$ of cardinality $k$ that consist of linearly independent vectors, and are contained in a strictly larger subset of $L \cap S$ whose linear rank is still $k$. A moment of reflection reveals that
	$$ d_{k}(L \cap S) \leq {k+1 \choose k} \cdot \rho_{k+1}^{k}(L \cap S). $$
	Indeed, if we go over all $(k+1)$-element subsets that are counted in the definition of $\rho_{k+1}^{k}(L \cap S)$, and for each of them consider all of its $k$-element subsets, then we cover all of the subsets counted in the definition of $d_k(L \cap S)$. From  (\ref{eq_1451}) we thus see that for $k \leq n-2$,
	\begin{equation}
		\EE  d_k(L \cap S)
		\leq (k+1) \cdot \frac{\lambda^k}{k!} \cdot  (3^{k+1} (3/4)^{n/2} + 5^{k+1} 2^{-n} ).
		\label{eq_1718} \end{equation}
	Recall that  $\lambda \leq n/30$.
	By using the inequalities $k! \geq (k/e)^k$
	and $(e a/x)^x \leq e^a$, we see that for $k \leq n/10$, the right-hand side of (\ref{eq_1718}) is at most
	$$ 10 n \cdot \left( \frac{en}{30 k} \right)^k \cdot  (3^{\frac{n}{10}} (3/4)^{\frac{n}{2}} + 5^{\frac{n}{10}} 2^{-n} )
	\leq 10 n \cdot e^{\frac{n}{30} } \cdot  (3^{\frac{n}{10}} (3/4)^{\frac{n}{2}} + 5^{\frac{n}{10}} 2^{-n} ) \leq C e^{-cn}. $$
	Thus the right-hand side of (\ref{eq_1718}) is at most $C e^{-cn}$ if $k \leq n/10$. Since $d_k(L \cap S)$ is a non-negative integer,
	we conclude  that the probability that the random variable $d_k(L \cap S)$ is positive 
is at most  $C' e^{-c'n}$. 	Therefore,
	\begin{equation}  \PP \left( \exists 1 \leq k \leq n/10, \ d_k(L \cap S) > 0 \right) \leq \sum_{k=1}^{\lfloor n/10 \rfloor} C' e^{-c'n} \leq C e^{-c n}. \label{eq_1111} \end{equation}
	If $d_k(L \cap S) = 0$ for all $k \leq n/10$, then for any subspace $E \subseteq \RR^n$
	of dimension at most $n/10$, the vectors in $L \cap S \cap E$ are linearly independent.
	The probability of this event is at least $1 - C e^{-cn}$ according to (\ref{eq_1111}).

\medskip
	Next, from (\ref{eq_1450}) and the Markov-Chebyshev inequality, the probability that there exist at least $\ell = \lceil n/10 \rceil$ linearly independent vectors in $L \cap S$ is at most
	$$ \EE \rho_\ell^\ell(L \cap S) = \frac{\lambda^\ell}{\ell!} \leq \left( \frac{ e\lambda}{\ell} \right)^\ell \leq C e^{-cn}.
	$$
	In other words, with probability  of at least $1 - C e^{-c n}$, the linear span of the set $L \cap S$ is of dimension at most $n/10$.
	
	\medskip In summary, consider the event where
	the set $L \cap S$
	spans a subspace of dimension at most $n/10$,
	and moreover, for any subspace $E \subseteq \RR^n$ of dimension at most $n/10$, the vectors in $L \cap S \cap E$ are linearly independent.
	By the preceding two paragraphs, the probability of this event is at least $1 - C e^{-cn}$. Under this event, the set $L \cap S$ contains at most $n/10$ vectors, which are linearly independent.
\end{proof}

\section{Schmidt's sieve}
\label{sec_inc_exc}

The usual inclusion-exclusion principle states that for any finite set $B$ and
integers $R_1,R_2\geq 0$, with $R_1$ odd and $R_2$ even,
\begin{equation}
\sum_{j=0}^{R_1}(-1)^j {|B|\choose j}
\leq
1_{\{B=\emptyset\}}
\leq
\sum_{j=0}^{R_2}(-1)^j {|B|\choose j}.
\label{eq_1503} \end{equation}
An analogous, more sophisticated sieve was introduced by Schmidt
\cite{schmidt} and modified by Kim \cite{Kim0, Kim}. A variant of this sieve is presented in detail in this section.

\medskip
The {\it rank deficiency} of a finite set $X\subseteq \RR^n$ is defined as
$$
\delta(X)=|X|-\rank(X)\geq 0.
$$
Thus $\delta(X)=0$ if and only if $X$ consists of linearly independent vectors.
For a set $B$, we write ${B\choose k}$ for the collection of all subsets of
$B$ of cardinality $k$.
For a finite set $B\subseteq \RR^n$ and integers $k,r\geq 0$, define
\begin{equation}
\mathcal S_{k,r}(B)
=
\left\{
X\in {B\choose k+r}\, ;\,
\delta(X)\leq 1_{\{r\textrm{ is odd}\}}
\right\} \label{eq_1423}
\end{equation}
and
\begin{equation}
\mathcal T_{k,r}(B)
=
\left\{
X\in {B\choose k+r}\, ;\,
\delta(X)\leq 1_{\{r\textrm{ is even}\}}
\right\}.
\label{eq_1424} \end{equation}
Thus $\mathcal S_{k,r}(B)$ consists of the linearly independent subsets of
$B$ of cardinality $k+r$ when $r$ is even, and of the subsets of cardinality
$k+r$ and rank deficiency at most one when $r$ is odd. The convention for
$\mathcal T_{k,r}(B)$ is the opposite one.
Recall that $\cF(\RR^n)$ is the collection  of all finite subsets of $\RR^n$.

\begin{proposition}
Let $\vphi:\mathcal F(\RR^n)\to [0,\infty)$ be a function such that
$\vphi(Y)=0$ whenever the vectors in $Y$ are linearly dependent.
Let $B\subseteq \RR^n$ be finite, and let $k\geq 0$. Let $R_1,R_2\geq 0$
be integers such that $R_1$ is odd and $R_2$ is even. Then,
\begin{equation*}
\sum_{r=0}^{R_1}
(-1)^r
\sum_{X\in \mathcal S_{k,r}(B)}
\sum_{Y\in {X\choose k}}
\vphi(Y)
\leq
\vphi(B)1_{\{|B|=k\}}
\leq
\sum_{r=0}^{R_2}
(-1)^r
\sum_{X\in \mathcal T_{k,r}(B)}
\sum_{Y\in {X\choose k}}
\vphi(Y).
\end{equation*}
\label{prop_1725}
\end{proposition}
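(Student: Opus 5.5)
The plan is to interchange the order of summation so that the outer sum runs over $Y$. Then, for each fixed $Y$, the inequality reduces to a purely combinatorial statement about one set, which I will prove with a sign-reversing pairing in the spirit of the proof of (\ref{eq_1503}).

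\medskip\textbf{Reduction to a fixed $Y$.} Since $\vphi\ge 0$ and $\vphi(Y)=0$ for linearly dependent $Y$, both outer sums can be rewritten as
$$\sum_{Y\in{B\choose k},\ \delta(Y)=0}\vphi(Y)\cdot \Sigma^{\pm}(Y),$$
where $\Sigma^{\pm}(Y)$ is the signed count of the sets $X\supseteq Y$ with $X\in\mathcal S_{k,r}(B)$ (respectively $\mathcal T_{k,r}(B)$), with sign $(-1)^r$ and $r\le R_1$ (respectively $r\le R_2$). The middle term is $\sum_Y \vphi(Y)1_{\{Z=\emptyset\}}$, where $Z=B\setminus Y$, because $|B|=k$ and $Y\subseteq B$ with $|Y|=k$ force $Y=B$. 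It therefore suffices to show, for each linearly independent $Y$, that $\Sigma^{S}(Y)\le 1_{\{Z=\emptyset\}}\le\Sigma^{T}(Y)$.

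Write $X=Y\cup W$ with $W\subseteq Z$ and $|W|=r$, and set $\delta'(W)=\delta(Y\cup W)$. Let $a_r$ be the number of $W$ of size $r$ with $\delta'(W)=0$, and $b_r$ the number with $\delta'(W)=1$. Then
$$\Sigma^{S}=\sum_{r\le R_1}(-1)^ra_r-\sum_{r\le R_1,\ r \text{ odd}}b_r,\qquad \Sigma^{T}=\sum_{r\le R_2}(-1)^ra_r+\sum_{r\le R_2,\ r\text{ even}}b_r.$$
If $Z=\emptyset$, only $r=0$ contributes, and both sums equal $1$. Hence assume $Z\neq\emptyset$; the goal is $\Sigma^S\le 0\le\Sigma^T$.

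\medskip\textbf{Pairing.} Fix an element $z\in Z$. Pair each $W$ with $\delta'(W)=0$ and $z\notin W$ with $W\cup\{z\}$ whenever $\delta'(W\cup\{z\})=0$. This covers every $\delta'$-independent set containing $z$, since subsets of independent sets are independent. Paired sets cancel in $\sum_{r\le R}(-1)^ra_r$, except for pairs split by the truncation: these are $W$ with $|W|=R$, $z\notin W$, and $W\cup\{z\}$ independent, and they contribute $(-1)^R$ times their number. The unpaired sets are the independent $W\not\ni z$ with $W\cup\{z\}$ dependent. For such $W$, the set $J=W\cup\{z\}$ has $\delta'(J)=1$ exactly, because adjoining one vector raises the rank deficiency by at most one. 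Each such $W$ contributes $(-1)^{|J|-1}$, and $W\mapsto J$ is injective into $\{J:\delta'(J)=1\}$.

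\medskip\textbf{Sign bookkeeping.} For $R=R_1$ odd, the boundary term is $\le 0$. An unpaired $W$ contributes positively only when $|J|$ is odd; then $|J|\le R_1$ (since $|J|\le R_1+1$ and $R_1+1$ is even), so $J$ is counted in $\sum_{r\text{ odd}}b_r$. Subtracting that sum therefore kills all positive contributions, giving $\Sigma^S\le 0$.

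For $R=R_2$ even, the boundary term is $\ge 0$. The negative contributions come from $J$ of even size $\le R_2$, each of which is compensated by its count in $\sum_{r\text{ even}}b_r$. Hence $\Sigma^T\ge 0$.

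\medskip\textbf{Main obstacle.} The delicate step is the bookkeeping just described: choosing the pairing so that every unpaired term corresponds injectively to a set of deficiency exactly one whose parity matches the correction term in $\mathcal S_{k,r}$ or $\mathcal T_{k,r}$. The observation $\delta'(W\cup\{z\})\le\delta'(W)+1$ is what makes this correspondence work.
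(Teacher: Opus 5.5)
Your proof is correct, and it proves the key per-$Y$ inequality (the paper's Lemma~\ref{lem_1444}, i.e.\ (\ref{eq_1650}) with $A=Y$) by a genuinely different route. The reduction to a fixed linearly independent $Y$ is the same as in the paper. The routes differ in how the lemma itself is proved.

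The paper normalizes the counts into densities $p_r=|\mathcal P_r|/\binom Nr$ and $q_r=|\mathcal Q_r|/\binom Nr$. It then records the inequalities $p_{r+1}\le p_r$, $q_{r+1}\le q_r$ and $p_r\le q_{r+1}$. Finally it sums by parts against the partial sums $c_r=(-1)^r\binom{N-1}{r}$, using an interleaved auxiliary sequence $d_r$ chosen separately for the odd and even cases.

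You instead use a sign-reversing pairing $W\leftrightarrow W\cup\{z\}$ for a fixed $z\in Z$, and you account exactly for the two ways it can fail. Pairs cut off by the truncation contribute with sign $(-1)^R$, which always has the favorable sign. Unpaired $W$ inject into deficiency-one sets $J\ni z$ whose parity is exactly the one corrected by $\mathcal S_{k,r}$ or $\mathcal T_{k,r}$.

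The two arguments are closely related. Since $\binom{N-1}{r}$ counts the $r$-subsets avoiding a fixed element, the paper's summation by parts is in effect an averaged version of your pairing.

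Your version is more transparent and uses slightly less. You only need that independence passes to subsets and that adjoining one vector raises $\delta$ by at most one; you do not need the monotonicity of $q_r$. You also get an exact expression for the truncated alternating sum, showing that only deficiency-one sets through $z$ are needed for the correction. The paper's version packs all the structural input into a few inequalities between densities, after which the rest is a mechanical summation by parts, closer in spirit to Schmidt's original sieve.
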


The remainder of this section is devoted to the proof of Proposition
\ref{prop_1725}. Let $A\subseteq B\subseteq \RR^n$ be finite sets. For $r\geq 0$, set
\begin{equation}
\sigma_r(A,B)
=
\#\left\{
Y\subseteq B\setminus A\, ;\, |Y|=r,\
\delta(A\cup Y)\leq 1_{\{r\textrm{ is odd}\}}
\right\}
\label{eq_1421} \end{equation}
and
\begin{equation}
\tau_r(A,B)
=
\#\left\{
Y\subseteq B\setminus A\, ;\, |Y|=r,\
\delta(A\cup Y)\leq 1_{\{r\textrm{ is even}\}}
\right\}.
\label{eq_1422} \end{equation}
When $r$ is even, the quantity $\sigma_r(A,B)$ counts the number of subsets of size $r$ of $B \setminus A$
such that when we add them to $A$, we obtain a linearly independent set. When $r$ is odd,
in place of linear independence we require rank deficiency at most one. The convention for $\tau_r(A,B)$ is the opposite one.

\begin{lemma}
Let $A\subseteq B\subseteq \RR^n$ be finite sets, and assume that $A$ consists
of linearly independent vectors. Then for every $R_1,R_2\geq 0$ with $R_1$
odd and $R_2$ even,
\begin{equation}
\label{eq_1650}
\sum_{r=0}^{R_1}(-1)^r\sigma_r(A,B)
\leq
1_{\{A=B\}}
\leq
\sum_{r=0}^{R_2}(-1)^r\tau_r(A,B).
\end{equation}
\label{lem_1444}
\end{lemma}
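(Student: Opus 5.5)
The plan is to reduce both inequalities in (\ref{eq_1650}) to a sign-reversing pairing on subsets of $D := B \setminus A$, with the rank-deficiency-one sets absorbing the unpaired terms. For $Y \subseteq D$ we have $\delta(A \cup Y) = |Y| - (\rank(A\cup Y) - |A|)$, since $A$ is linearly independent. Write $a_r$ for the number of $Y \subseteq D$ with $|Y| = r$ and $\delta(A\cup Y) = 0$, and $b_r$ for the number with $\delta(A \cup Y) = 1$. Then
$$ \sigma_r(A,B) = a_r + 1_{\{r \textrm{ odd}\}}\, b_r, \qquad \tau_r(A,B) = a_r + 1_{\{r \textrm{ even}\}}\, b_r .$$
If $D = \emptyset$ then $\sigma_0 = \tau_0 = a_0 = 1$ and all other terms vanish, so both inequalities hold with equality. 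It therefore remains to show, for $D \neq \emptyset$, that $\sum_{r \le R_1} (-1)^r \sigma_r \le 0$ and $\sum_{r\le R_2} (-1)^r \tau_r \ge 0$.

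Assume $D \neq \emptyset$ and fix once and for all an element $d \in D$. Call $Y \subseteq D \setminus \{d\}$ \emph{good} if $A \cup Y$ is linearly independent. Every $Y$ counted in some $a_r$ is either a good set or of the form $Y'\cup\{d\}$ with $Y'$ good. For a good $Y$ there are exactly two possibilities. Either $A \cup Y \cup \{d\}$ is linearly independent, in which case we call $Y$ \emph{paired}, and the sets $Y$ and $Y \cup \{d\}$ contribute opposite signs to the alternating $a$-sum. Or $d \in \mathrm{span}(A\cup Y)$, in which case we call $Y$ \emph{unpaired}; then $\delta(A\cup Y\cup\{d\}) = 1$, so $Y \cup\{d\}$ is counted in $b_{|Y|+1}$. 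Let $u_r$ be the number of unpaired good $Y$ of size $r$, and $p_r$ the number of paired good $Y$ of size $r$. Summing the pairs up to a cutoff $R$, the only pairs that fail to cancel are those with $|Y| = R$, because their partner $Y \cup \{d\}$ has size $R+1$. This gives
$$ \sum_{r=0}^{R} (-1)^r a_r = \sum_{r=0}^{R} (-1)^r u_r + (-1)^R p_R . $$

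The map $Y \mapsto Y \cup\{d\}$ from unpaired good sets of size $r$ into sets counted by $b_{r+1}$ is injective. Hence $b_{r+1} \ge u_r$ for every $r$.

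For $R_1$ odd we combine these facts:
$$ \sum_{r\le R_1}(-1)^r\sigma_r = \sum_{r \le R_1} (-1)^r u_r - p_{R_1} - \sum_{\substack{r \le R_1\\ r \textrm{ odd}}} b_r \le \sum_{r\le R_1}(-1)^r u_r - \sum_{\substack{r\le R_1-1\\ r\textrm{ even}}} u_r - p_{R_1} = -\sum_{\substack{r \le R_1\\ r \textrm{ odd}}} u_r - p_{R_1} \le 0 .$$
Symmetrically, for $R_2$ even,
$$ \sum_{r\le R_2}(-1)^r \tau_r = \sum_{r\le R_2}(-1)^r u_r + p_{R_2} + \sum_{\substack{r\le R_2\\ r\textrm{ even}}} b_r \ge \sum_{r\le R_2}(-1)^r u_r + \sum_{\substack{r \le R_2 - 1\\ r \textrm{ odd}}} u_r + p_{R_2} = \sum_{\substack{r\le R_2\\ r\textrm{ even}}} u_r + p_{R_2} \ge 0 . $$
In both displays the middle inequality uses $b_{r+1} \ge u_r$, applied to the $b$-terms of the appropriate parity.

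The only delicate point is the bookkeeping at the truncation level $R$. One must check the sign of the boundary term $(-1)^R p_R$, and check that the $b$-terms available (odd indices for $\sigma$, even indices for $\tau$) are exactly those needed to dominate the unpaired sets of the opposite parity. The parity conventions in (\ref{eq_1421}) and (\ref{eq_1422}) are designed precisely for this, and the computation above confirms that they match.
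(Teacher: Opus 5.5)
Your proof is correct, and it follows a genuinely different route from the paper's.

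\textbf{The paper's route.} The paper normalizes the counts to proportions. With $N=|B\setminus A|$, it considers the fraction of the $\binom Nr$ completions of $A$ of size $r$ that are linearly independent, and the fraction that have rank deficiency at most one. It shows by averaging that both proportions are non-increasing in $r$. It also shows that the independent proportion at level $r$ is at most the deficiency-at-most-one proportion at level $r+1$. It then performs summation by parts against the partial sums $\sum_{j\le r}(-1)^j\binom Nj=(-1)^r\binom{N-1}{r}$ of ordinary inclusion--exclusion. This requires auxiliary sequences chosen according to parity and separate bookkeeping near $r=N-1,N$.

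\textbf{Your route.} You instead fix one element $d\in B\setminus A$ and toggle it. This is precisely the combinatorial proof of that binomial identity, lifted to the constrained families. It yields the exact formula $\sum_{r\le R}(-1)^r a_r=\sum_{r\le R}(-1)^r u_r+(-1)^R p_R$, and the injection $Y\mapsto Y\cup\{d\}$ lets the deficiency-one sets absorb the unpaired ones.

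\textbf{Comparison.} Your version is shorter and needs no normalization or boundary cases. It also exhibits the slack explicitly: at least $\sum_{r\le R_1,\, r\textrm{ odd}}u_r+p_{R_1}$, resp.\ $\sum_{r\le R_2,\, r\textrm{ even}}u_r+p_{R_2}$. It uses slightly less structure as well. You need only that linear independence is hereditary and that adjoining one vector to an independent set creates deficiency at most one. The paper's upper bound additionally invokes monotonicity of the deficiency-at-most-one proportion. The paper's averaged formulation, in turn, displays the sieve transparently as a monotone reweighting of the classical Bonferroni inequalities.
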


\begin{proof}
Let $C=B\setminus A$ and denote $N=|C|$. If $N=0$  then $A=B$ and for all $r \geq 0$,
$$
\sigma_r(A,B)=\tau_r(A,B)=1_{ \{ r=0 \}}.
$$
Thus both sums in (\ref{eq_1650}) are equal to $1$, and the lemma follows.
From now on we may therefore assume that $A \neq B$ and hence $$ N\geq 1. $$ For $r\geq 0$ define
$$
\mathcal P_r
=
\{Z\subseteq C\, ;\, |Z|=r,\ \delta(A\cup Z)=0\}
$$
and
$$
\mathcal Q_r
=
\{Z\subseteq C\, ;\, |Z|=r,\ \delta(A\cup Z)\leq 1\}.
$$
For $r>N$ we have $\mathcal P_r=\mathcal Q_r=\emptyset$. Moreover, for all
$r\geq 0$,
\begin{equation}
\sigma_r(A,B)
=
\left\{
\begin{array}{ll}
|\mathcal P_r| & r\textrm{ is even}\\
|\mathcal Q_r| & r\textrm{ is odd}
\end{array}
\right.
\qquad
\tau_r(A,B)
=
\left\{
\begin{array}{ll}
|\mathcal Q_r| & r\textrm{ is even}\\
|\mathcal P_r| & r\textrm{ is odd}
\end{array}
\right.
\label{eq_1738} \end{equation}
For $r=0,\ldots,N$, set
\begin{equation}
p_r=\frac{|\mathcal P_r|}{\binom Nr},
\qquad
q_r=\frac{|\mathcal Q_r|}{\binom Nr}.
\label{eq_1739} \end{equation}
Thus $p_r$ is the probability that a
random completion of $A$ to a subset of $B$ of cardinality $|A|+r$ is still
linearly independent, and $q_r$ is the probability that its rank deficiency
is at most one. We also put $p_r=q_r=0$ for $r>N$. Clearly, for all $r \geq 0$,
\begin{equation}
p_r \leq q_r. \label{eq_1414}
\end{equation}
Note that the probability $p_r$ that such a random completion would consist of linearly-independent vectors is non-increasing with $r$,
as well as the probability $q_r$ that its rank deficiency is at most one. Therefore, for any $r \geq 0$,
\begin{equation}
 p_r \geq p_{r+1} \qquad \text{and} \qquad q_r \geq q_{r+1}.
 \label{eq_1702} \end{equation}
Another observation is that if $X \subsetneq B$ is linearly independent, then by adding any element from $B \setminus X$ to $X$, we obtain
a set $Y \subseteq B$ whose rank deficiency is at most one. It thus follows that for $r=0,\ldots,N-1$,
\begin{equation} p_r\leq q_{r+1}.
\label{eq_1703} \end{equation}
For $r \geq 0$ write
\begin{equation}
c_r=\sum_{j=0}^{r}(-1)^j\binom Nj = (-1)^r\binom{N-1}{r},
\label{eq_1239} \end{equation}
with $c_{-1} = 0$, where we recall that  ${N-1 \choose r} = 0$ if $r > N-1$.
By using summation by parts we see that for any numbers $d_0,\ldots,d_R$,
\begin{equation}
\sum_{r=0}^{R}(-1)^r\binom Nr d_r
= \sum_{r=0}^{R} (c_r - c_{r-1}) d_r
= c_Rd_R+\sum_{r=0}^{R-1}c_r(d_r-d_{r+1}).
\label{eq_1241} \end{equation}

\medskip {\it The left-hand side inequality in (\ref{eq_1650}).}
In order to prove the left-hand side inequality in (\ref{eq_1650}) we set $R = R_1$, an odd number. For $r \geq 0$ define
\begin{equation}
d_r=
\left \{ \begin{array}{ll}
p_r & r\textrm{ is even} \\
p_{r-1} & r\textrm{ is odd}
\end{array} \right.
\label{eq_1737} \end{equation}
which is a number between zero and one. Note that if $r$ is odd, necessarily $r-1 \geq 0$ and $d_r = p_{r-1}$ is well-defined.
Note that for $r \geq 0$,
\begin{equation}
\left \{ \begin{array}{ll} d_r - d_{r+1} = 0 & r \textrm{ is even} \\ d_r - d_{r+1} \geq 0 & r \textrm{ is odd} \end{array} \right. \label{eq_1311} \end{equation}
Indeed, if \(r\) is even, then \(d_r - d_{r+1}=p_r - p_r = 0\), while if \(r\) is
odd, then $r \geq 1$ and  $d_r-d_{r+1}=p_{r-1}-p_{r+1}\geq 0$ by (\ref{eq_1702}). Thus
(\ref{eq_1311}) holds true.
Next, we claim that for all $r \geq 0$,
\begin{equation}
(-1)^r\sigma_{r}(A,B)
\leq
(-1)^r\binom Nr d_r.
\label{eq_1736}
\end{equation}
Indeed, if $r > N$ then $\binom Nr = 0$ and both the left-hand side and the right-hand side of (\ref{eq_1736})
vanish, by (\ref{eq_1738}). It thus suffices to prove (\ref{eq_1736}) for $r=0,\ldots,N$. If $r$ is even, then by (\ref{eq_1738}), (\ref{eq_1739})
and (\ref{eq_1737}),
$$ (-1)^r \sigma_{r}(A,B) = (-1)^r \binom Nr p_r = (-1)^r \binom Nr d_r. $$
Hence (\ref{eq_1736}) holds true
when $r$ is even. If $r$ is odd, then $r \geq 1$ and $r-1$ is even.
Thus by (\ref{eq_1738}), (\ref{eq_1739}) and (\ref{eq_1703}),
$$ (-1)^r \sigma_{r}(A,B) = -\binom Nr q_r \leq -\binom Nr p_{r-1} = (-1)^r \binom Nr d_r, $$
where  we used  (\ref{eq_1737}) in the last passage. This completes the proof of (\ref{eq_1736}) in all cases.
We deduce from (\ref{eq_1311}), (\ref{eq_1736}) and the summation by parts (\ref{eq_1241}) that
\begin{align}  \nonumber
\sum_{r=0}^{R}(-1)^r\sigma_{r}(A,B)
& \leq
\sum_{r=0}^{R}(-1)^r\binom Nr d_r =
c_Rd_R+\sum_{r=0}^{R-1}c_r(d_r-d_{r+1})
\\ & \leq \sum_{r=0}^{R-1}c_r(d_r-d_{r+1}) = \sum_{\substack{r=0 \\ r \textrm{ is odd}}}^{R-1}c_r(d_r-d_{r+1}) \leq 0,
\label{eq_1319}
\end{align}
where in the first passage in (\ref{eq_1319}) we used that $c_R d_R \leq 0$ since $d_R \geq 0$ by (\ref{eq_1737}) while
$c_R \leq 0$ by (\ref{eq_1239}) as $R$ is an odd number. In the last passage in (\ref{eq_1319}) we used that
$c_r \leq 0$ by (\ref{eq_1239}) as $r$ is odd, while $d_r - d_{r+1} \geq 0$. Since $A \neq B$, the left-hand side inequality in (\ref{eq_1650}) follows from~(\ref{eq_1319}).

\medskip {\it The right-hand side inequality in (\ref{eq_1650}).}
Set
$R = R_2$, an even number. For $r \geq 0$ define
\begin{equation}
d_r=
\left \{ \begin{array}{ll}
q_r & r\textrm{ is even and } r\leq N \\
q_{r+1} & r\textrm{ is odd and } r \leq N-1 \\
q_N & r \geq N
\end{array} \right.
\label{eq_1737_} \end{equation}
For $r\geq 0$,
\begin{equation}
\left \{ \begin{array}{ll}
c_r(d_r-d_{r+1}) = 0 & r \textrm{ is odd} \\
d_r-d_{r+1} \geq 0 & r \textrm{ is even}
\end{array} \right.
\label{eq_1311_} \end{equation}
Indeed, consider first the case where $r$ is odd. If $r<N$, then
$d_r=d_{r+1}=q_{r+1}$. If $r \geq N$, then $c_r=0$
by (\ref{eq_1239}). Thus
$c_r(d_r-d_{r+1})=0$ whenever $r$ is odd.
Let us now prove (\ref{eq_1311_}) assuming that $r$ is even. If
$r<N-1$, then by (\ref{eq_1702}) and (\ref{eq_1737_}), $$ d_r-d_{r+1}=q_r-q_{r+2}\geq 0. $$ If $r=N-1$, then
$d_r-d_{r+1}=q_{N-1}-q_N\geq 0$
again by (\ref{eq_1702}). If $r \geq N$, then
$d_r-d_{r+1}=q_N-q_N = 0$.
This proves (\ref{eq_1311_}).
Next, let us show that for all $r \geq 0$,
\begin{equation}
(-1)^r\tau_{r}(A,B)
\geq
(-1)^r\binom Nr d_r.
\label{eq_1736_}
\end{equation}
Indeed, if $r>N$ then $\binom Nr=0$ and both the left-hand side and the
right-hand side of (\ref{eq_1736_}) vanish. It remains to prove
(\ref{eq_1736_}) for $r=0,\ldots,N$. If $r$ is even, then by
(\ref{eq_1738}), (\ref{eq_1739}) and (\ref{eq_1737_}),
$$
(-1)^r\tau_r(A,B)
=
(-1)^r\binom Nr q_r
=
(-1)^r\binom Nr d_r.
$$
Hence (\ref{eq_1736_}) holds when $r$ is even. Next, consider the case where $r$ is odd. If $r$ is odd and $r<N$,
then by (\ref{eq_1738}), (\ref{eq_1739}) and (\ref{eq_1703}),
$$
(-1)^r\tau_r(A,B)
=
-\binom Nr p_r
\geq
-\binom Nr q_{r+1}
=
(-1)^r\binom Nr d_r,
$$
where we used (\ref{eq_1737_}) in the last passage. If $r$ is odd and
$r=N$ then
$$
(-1)^r\tau_r(A,B)
=
-p_N \geq -q_N =
(-1)^r\binom N r d_r,
$$
where the inequality $p_N \leq q_N$ follows from (\ref{eq_1414}) above.
This completes the proof of (\ref{eq_1736_}) in all cases.
We deduce from (\ref{eq_1311_}), (\ref{eq_1736_}) and the summation by parts
(\ref{eq_1241}) that
\begin{align}  \nonumber
\sum_{r=0}^{R}(-1)^r\tau_{r}(A,B)
& \geq
\sum_{r=0}^{R}(-1)^r\binom Nr d_r
=
c_Rd_R+\sum_{r=0}^{R-1}c_r(d_r-d_{r+1})
\\
& \geq
\sum_{r=0}^{R-1}c_r(d_r-d_{r+1})
=
\sum_{\substack{r=0 \\ r \textrm{ is even}}}^{R-1}c_r(d_r-d_{r+1})
\geq 0.
\label{eq_1319_}
\end{align}
Here, in the first passage in (\ref{eq_1319_}) we used that $c_Rd_R\geq 0$,
since $d_R\geq 0$ by (\ref{eq_1737_}) while $c_R\geq 0$ by
(\ref{eq_1239}) as $R$ is an even number. In the last passage in (\ref{eq_1319_}) we used that
$c_r\geq 0$ by (\ref{eq_1239}) as $r$ is even, while
$d_r-d_{r+1}\geq 0$ by (\ref{eq_1311_}). The right-hand side inequality in
(\ref{eq_1650}) follows from (\ref{eq_1319_}). The proof of the lemma is
complete.
\end{proof}

\begin{proof}[Proof of Proposition \ref{prop_1725}]
Consider first the sum
$$
L=
\sum_{r=0}^{R_1}
(-1)^r
\sum_{X\in \mathcal S_{k,r}(B)}
\sum_{Y\in {X\choose k}}
\vphi(Y).
$$
Since $\vphi(Y)=0$ whenever the vectors in $Y$ are linearly dependent, only
linearly independent sets $Y\in {B\choose k}$ contribute to the sum $L$. Let us fix a linearly independent subset
$Y \subseteq B$ of cardinality $k$. A term involving $\vphi(Y)$ can only arise from a set
$X\in\mathcal S_{k,r}(B)$ satisfying
$$
X=Y\cup Z,
\qquad
Z\subseteq B\setminus Y,
\qquad
|Z|=r.
$$
Consequently, in view of (\ref{eq_1423}) and (\ref{eq_1421}), the coefficient of $\vphi(Y)$ in the sum $L$ is
$$
\sum_{r=0}^{R_1}(-1)^r\sigma_r(Y,B).
$$
By Lemma \ref{lem_1444}, applied with $A=Y$, this coefficient is at most
$1_{\{Y=B\}}$. Since $\vphi\geq 0$, we obtain
\begin{equation}
L
\leq
\sum_{\substack{Y\in {B\choose k}\\ Y\textrm{ linearly independent}}}
1_{\{Y=B\}}\vphi(Y) = \vphi(B)1_{\{|B|=k\}},
\label{eq_1427} \end{equation}
proving the left-hand side inequality in the conclusion of the proposition. Next, set 
$$
U=
\sum_{r=0}^{R_2}
(-1)^r
\sum_{X\in \mathcal T_{k,r}(B)}
\sum_{Y\in {X\choose k}}
\vphi(Y).
$$
Again, only linearly independent sets $Y\in {B\choose k}$ contribute to the sum. Fix
such a linearly-independent set $Y$. A term in the sum $U$ involving $\vphi(Y)$ arises from a set
$X\in\mathcal T_{k,r}(B)$ such that
$$
X=Y\cup Z,
\qquad
Z\subseteq B\setminus Y,
\qquad
|Z|=r.
$$
Hence the coefficient of $\vphi(Y)$ in the sum $U$ is
$$
\sum_{r=0}^{R_2}(-1)^r\tau_r(Y,B).
$$
By Lemma \ref{lem_1444}, applied with $A=Y$, this coefficient is at least
$1_{\{Y=B\}}$. Since $\vphi\geq 0$, it follows that
$$
U
\geq
\sum_{\substack{Y\in {B\choose k}\\ Y\textrm{ linearly independent}}}
1_{\{Y=B\}}\vphi(Y) = \vphi(B) 1_{\{|B|=k\}}.
$$
This proves the right-hand side inequality in the conclusion of the proposition, and the proof is complete.
\end{proof}

\section{Proof of Theorem \ref{thm1}}
\label{sec4}

Let us set
$$ c_0 = 1/200. $$
Suppose that $n \geq 2$ and fix a measurable subset  $S \subseteq \RR^n$ with $S \cap (-S) = \emptyset$ such that
$$ \lambda = \Vol_n(S) $$
satisfies
$$ 0 < \lambda \leq c_0 n.
$$
Recall that $\cF(S)$ is the  collection of all finite subsets of $S$.

\begin{lemma} Assume that $n \geq 20$, let
$0\leq k\leq n/10$, and let $\cA_k\subseteq \cF(S)$ be a Borel set containing
only subsets of cardinality exactly $k$. Then
$$
\PP(L\cap S\in \cA_k)-\PP(P\cap S\in \cA_k)\leq C e^{-cn},
$$
where $c,C>0$ are universal constants.
\label{lem_1233}
\end{lemma}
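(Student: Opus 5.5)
We apply the upper bound in Proposition \ref{prop_1725} with $\vphi(Y) = 1_{\cA_k}(Y)\cdot \iota(Y)$. This $\vphi$ is nonnegative and vanishes on linearly dependent sets, so it is admissible. We take $B = L\cap S$ and choose an even $R_2$ of order $n/20$, so that $k + R_2 + 1 \le n-1$. Pointwise this gives
\[ 1_{\{L\cap S\in\cA_k\}}\, \iota(L\cap S) \le \sum_{r=0}^{R_2} (-1)^r \sum_{X\in\mathcal T_{k,r}(L\cap S)} \sum_{Y\in{X\choose k}} \vphi(Y). \]
The left-hand side differs from $1_{\{L\cap S\in\cA_k\}}$ only on the event that $L\cap S$ is linearly dependent. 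By Proposition \ref{prop_1445} (note $\lambda \le c_0 n \le n/30$), that event has probability at most $Ce^{-cn}$. So it suffices to bound the expectation of the right-hand side by $\PP(P\cap S\in\cA_k) + Ce^{-cn}$.

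We split each inner sum according to $\delta(X)$. The linearly independent $X$ are counted in $\mathcal T_{k,r}$ for every $r$. The sets $X$ with $\delta(X)=1$ are counted only when $r$ is even, and then with sign $+$. The independent part is $\sum_{X\subseteq L\cap S} f_r(X)\iota(X)$ with $f_r(X) = 1_{\{|X|=k+r\}}\sum_{Y\in{X\choose k}}\vphi(Y)$. This $f_r$ is bounded and measurable, and it vanishes when $|X|\ge n$ because $k+r<n$. Proposition \ref{prop_1832} therefore turns its expectation into the corresponding Poisson expectation $\EE\sum_{X\subseteq P\cap S, |X|=k+r}\sum_{Y\in {X\choose k}} 1_{\cA_k}(Y)$; here $\iota(Y)=1$ a.s. for Poisson points, since $k<n$. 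The rank-deficient part has $\vphi(Y)\le 1$, so for each even $r$ it is at most ${k+r\choose k}\,\EE\rho_{k+r}^{k+r-1}(L\cap S)$. By Schmidt's bound (\ref{eq_1451}) this is at most ${k+r\choose k}\frac{\lambda^{k+r-1}}{(k+r-1)!}[3^{k+r}(3/4)^{n/2}+5^{k+r}2^{-n}]$. With $k+r\le n/10+n/20$ and $\lambda\le n/200$, the same computation as in the proof of Proposition \ref{prop_1445} should give $Ce^{-cn}$. The factor ${k+r\choose k}\le 2^{k+r}$ is absorbed because $c_0$ is small. For instance, $\lambda^m/m!\le e^{\lambda}$, and $2^{0.15n}3^{0.15n}(3/4)^{n/2}e^{n/200}$ is exponentially small. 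Summing over at most $n$ values of $r$ keeps the error at $Ce^{-cn}$.

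The remaining task is the Poisson side. Here we use the classical inclusion–exclusion: for a finite set $B$,
\[ \sum_{r=0}^{R}(-1)^r\sum_{X\in {B\choose k+r}}\sum_{Y\in{X\choose k}}1_{\cA_k}(Y) = \sum_{Y\in{B\choose k}}1_{\cA_k}(Y)\sum_{r=0}^R(-1)^r{|B|-k\choose r}. \]
The inner alternating sum equals $1_{\{|B|=k\}}$ plus a truncation error. That error is bounded in absolute value by ${|B|-k\choose R+1}$, or more crudely by $1_{\{|B|>k\}}{|B|-k-1\choose R}$. Hence the Poisson expectation is at most $\PP(P\cap S\in\cA_k)+\EE\left[{|P\cap S|\choose k}{|P\cap S|-k\choose R}\right]$. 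The last term equals $\frac{\lambda^{k+R}}{k!R!}$ by the factorial moment formula for a Poisson variable. Since $R\approx n/20\gg\lambda e^2$, we get $\frac{\lambda^{k+R}}{k!R!}\le e^{\lambda}\,\frac{\lambda^{R}}{R!}\le e^{n/200}(200e\cdot 20/ n\cdot n/... )$; more precisely $\lambda^R/R!\le(e\lambda/R)^R\le(20e/200)^{n/20}$, and this beats $e^{n/200}$. It is therefore $Ce^{-cn}$. Combining the three estimates proves the lemma.

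I expect the main obstacle to be bookkeeping rather than anything conceptual. One point is to check that the sign structure of $\mathcal T_{k,r}$ gives rank-deficient terms only with a plus sign, which is what allows a one-sided upper bound via (\ref{eq_1451}). The other is to choose $R_2$ and $c_0$ so that both the Schmidt error and the Poisson truncation error are exponentially small while $k+R_2<n-1$; this uses $n\ge 20$. Note that only an upper bound is required: the lower bound presumably follows later by applying the lemma to complements.
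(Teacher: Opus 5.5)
Your argument is essentially the paper's proof. It uses the same $\vphi=1_{\cA_k}\iota$, the upper half of Proposition~\ref{prop_1725}, and removal of the dependent event via Proposition~\ref{prop_1445}. It splits $\mathcal T_{k,r}$ into independent sets, moved to $P$ by Proposition~\ref{prop_1832}, and sets with $\delta(X)=1$, which occur only for even $r$ with a plus sign and are controlled by $\rho_\ell^{\ell-1}$. The Poisson side is classical inclusion--exclusion, and your $\lambda^{k+R}/(k!R!)$ is exactly the paper's $\binom{m}{k}\lambda^m/m!$ with $m=k+R$. Taking $R\approx n/20$ instead of $k+R\approx n/5$ is immaterial.

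There is, however, a false step in your estimate of the Schmidt term. You claim that $2^{0.15n}3^{0.15n}(3/4)^{n/2}e^{n/200}$ is exponentially small. Its logarithm is $(0.15\ln 6-\tfrac12\ln\tfrac43+0.005)\,n\approx 0.13\,n$, so it grows exponentially. Even for $\ell\le n/10$ the exponent is about $0.04\,n>0$, so no choice of $R$ rescues this route. The computation from the proof of Proposition~\ref{prop_1445} does not transfer either: its exponent was already only about $-0.0007\,n$, and that was without any $2^\ell$ factor. The loss comes from bounding $\lambda^{\ell-1}/(\ell-1)!$ by $e^{\lambda}$ and $6^\ell$ by its maximum separately. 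Instead, absorb the geometric factors into the exponential series, using $\binom{\ell}{k}\le 2^\ell$:
\[
\sum_{\ell\ge 1}\binom{\ell}{k}\frac{\lambda^{\ell-1}}{(\ell-1)!}\Bigl[3^\ell(3/4)^{n/2}+5^\ell 2^{-n}\Bigr]
\le \sum_{\ell\ge 1}\Bigl[6\,\frac{(6\lambda)^{\ell-1}}{(\ell-1)!}(3/4)^{n/2}+10\,\frac{(10\lambda)^{\ell-1}}{(\ell-1)!}2^{-n}\Bigr]
= 6e^{6\lambda}(3/4)^{n/2}+10e^{10\lambda}2^{-n}.
\]
This is $Ce^{-cn}$ because $6c_0=0.03<\tfrac12\ln\tfrac43\approx 0.144$; the second term is harmless. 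This is exactly the paper's computation, and it is the precise sense in which ``small $c_0$ absorbs $\binom{\ell}{k}$.'' With this replacement your proof goes through.
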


\begin{proof}
Let $m$ be an integer such that 
$$ R = m-k \in 2 \ZZ $$  and
$$
\left|m-\frac n5\right|\leq 1.
$$
Since $n \geq 20$, we have
\begin{equation} k+1\leq m\leq n-1,
\label{eq_1447} \end{equation}
and hence $R$ is a positive, even number. 
For a finite subset $Y \subseteq S$ define
\begin{equation}
\vphi(Y)=1_{\cA_k}(Y)\iota(Y).
\label{eq_1502} \end{equation}
Then $\vphi\geq 0$ while $\vphi(Y)=0$ whenever
the vectors in $Y$ are linearly dependent.
Let $\cE \subseteq \cF(S)$ be the collection of all linearly independent subsets of cardinality at most $n/10$. By Proposition \ref{prop_1445},
\begin{equation} \PP(L \cap S \in \cE)\geq 1 - C e^{-cn},
\label{eq_1458} \end{equation}
where we use that $\lambda\leq c_0n \leq n/30$.
For any finite set $B\subseteq S$, 
\begin{equation}
1_{\cA_k}(B)\leq 1_{\cE^c}(B) + \vphi(B)1_{\{|B|=k\}},
\label{eq_1935} \end{equation}
where $\cE^c = \cF(S) \setminus \cE$ is the complement of $\cE$. Applying (\ref{eq_1935}) with $B=L\cap S$ and using Proposition \ref{prop_1725}
with $R_2=R$, we get
\begin{equation}
1_{\cA_k}(L\cap S)
\leq
1_{\cE^c}(L \cap S)
+
\sum_{r=0}^{R}
(-1)^r
\sum_{X\in \mathcal T_{k,r}(L\cap S)}
\sum_{Y\in {X\choose k}}
\vphi(Y).
\label{eq_1949} \end{equation}
Recall from (\ref{eq_1424}) that $\mathcal T_{k,r}(L \cap S)$ consists of linearly independent
sets when $r$ is odd, and of sets of rank deficiency at most one when $r$ is
even. Since $0 \leq \vphi \leq 1$, we may bound the contribution 
of all sets $X$ with $\delta(X) = 1$ to the sum in (\ref{eq_1949})  as follows:
$$
\begin{aligned}
1_{\cA_k}(L\cap S)
& \leq \,
1_{\cE^c}(L \cap S)
+
\sum_{r=0}^{R}
(-1)^r
\sum_{X\in {L\cap S\choose k+r}}
\iota(X)
\sum_{Y\in {X\choose k}}
\vphi(Y) +
\sum_{r=0}^R  \sum_{\substack{ X\in {L\cap S\choose k+r} \\ \delta(X) = 1}}
{k+r \choose k}  \\
& \leq
1_{\cE^c}(L \cap S)
+
\sum_{r=0}^{R}
(-1)^r
\sum_{X\in {L\cap S\choose k+r}}
\iota(X)
\sum_{Y\in {X\choose k}}
\vphi(Y) +
\sum_{\ell=1}^{m}
{\ell\choose k}\rho_\ell^{\ell-1}(L\cap S).
\end{aligned}
$$
We may now take expectation and use Proposition \ref{prop_1832} in order to replace 
the random lattice~$L$ by the Poisson point process $P$. By using (\ref{eq_1458}) and Proposition \ref{prop_1832} we obtain
\begin{equation}
\PP(L\cap S\in \cA_k)
\leq \,
C e^{-cn}
+
\EE
\sum_{r=0}^{R}
(-1)^r
\sum_{X\in {P\cap S\choose k+r}}
\sum_{Y\in {X\choose k}}
\vphi(Y)
+
\EE
\sum_{\ell=1}^{m}
{\ell\choose k}\rho_\ell^{\ell-1}(L\cap S). \label{eq_1959}
\end{equation}
Here Proposition \ref{prop_1832} is applicable since $k+r  \leq k+R = m\leq n-1$ by (\ref{eq_1447}).

\medskip
With probability one, all subsets of $P\cap S$ of cardinality at most $m$ are
linearly independent. By (\ref{eq_1502}), almost surely  $\vphi(Y)=1_{\cA_k}(Y)$
for all $Y\in {P\cap S\choose k}$. 
Recall that $k+R = m$ and that $R$ is even. 
By the usual inclusion-exclusion inequality (\ref{eq_1503}) applied with $R_1 = R-1$,
\begin{equation}
\sum_{r=0}^{R}
(-1)^r
\sum_{X\in {P\cap S\choose k+r}}
\sum_{Y\in {X\choose k}}
\vphi(Y)
\, \leq \,
1_{\cA_k}(P\cap S)
+
\sum_{X\in {P\cap S\choose m}}
\sum_{Y\in {X\choose k}}
1_{\cA_k}(Y), \label{eq_2000}
\end{equation}
where the last term on the right-hand side is equal to the summand corresponding to $r = R$
on the left-hand side. Therefore, by taking expectation in (\ref{eq_2000}) and combining with (\ref{eq_1959}),
\begin{align} \nonumber
\PP(L\cap S\in \cA_k)
\leq \, &
C e^{-cn}
+
\PP(P\cap S\in \cA_k)
\\
&+
\EE
\sum_{X\in {P\cap S\choose m}}
\sum_{Y\in {X\choose k}}
1_{\cA_k}(Y)
+
\EE
\sum_{\ell=1}^{m}
{\ell\choose k}\rho_\ell^{\ell-1}(L\cap S). \label{eq_1507}
\end{align}
We need to bound the two terms in (\ref{eq_1507}). Begin by bounding the Poisson  term. Since $|P\cap S|$ is a Poisson random
variable with parameter $\lambda$,
\begin{align} \label{eq_1547}
\EE
\sum_{X\in {P\cap S\choose m}}
\sum_{Y\in {X\choose k}}
1_{\cA_k}(Y)
&\leq
{m\choose k}\EE {|P\cap S|\choose m} =
{m\choose k}\frac{\lambda^m}{m!}
\\
&\leq
2^m\left(\frac{e\lambda}{m}\right)^m
=
\left(\frac{2e\lambda}{m}\right)^m
\leq
C e^{-cn}. \nonumber
\end{align}
In the last line we used the bounds ${m \choose k} \leq 2^m$ and $m! \geq (m/e)^m$, as well as the facts that $\lambda\leq c_0 n$, $m\geq n/10$ and $c_0 \leq 1/200$.

\medskip
Next, let us bound the rank-deficiency-one  term. 
Recall from (\ref{eq_1447}) that $m \leq n-1$. 
By Schmidt's estimate
(\ref{eq_1451}), for $1\leq \ell\leq m$,
$$
\EE \rho_\ell^{\ell-1}(L\cap S)
\leq
\frac{\lambda^{\ell-1}}{(\ell-1)!}
\left[
3^\ell\left(\frac34\right)^{n/2}
+
5^\ell 2^{-n}
\right].
$$
Consequently, by using the inequality $\binom \ell k \leq 2^{\ell}$, 
\begin{align} \label{eq_1546}
\EE
\sum_{\ell=1}^{m}
{\ell\choose k}\rho_\ell^{\ell-1}(L\cap S)
&\leq
\sum_{\ell=1}^{\infty}
2^\ell
\frac{\lambda^{\ell-1}}{(\ell-1)!}
\left[
3^\ell\left(\frac34\right)^{n/2}
+
5^\ell 2^{-n}
\right]
\\ \nonumber
& = \left(\frac34\right)^{n/2} 6 e^{6 \lambda} + 10 e^{10 \lambda} 2^{-n}  \leq C e^{-cn}, \nonumber
\end{align}
where the last passage follows from the facts that  $\lambda\leq c_0n$ and $c_0 \leq 1/200$.
Substituting the  bounds (\ref{eq_1547}) and (\ref{eq_1546}) into (\ref{eq_1507}) we obtain
$$
\PP(L\cap S\in \cA_k)
\leq
\PP(P\cap S\in \cA_k)+C e^{-cn},
$$
completing the proof.
\end{proof}

\begin{proof}[Proof of Theorem \ref{thm1}]
We may assume that $n \geq 20$, since otherwise
the conclusion follows by increasing the universal constant $C$.
It suffices to prove the theorem under the assumption
$$
\Vol_n(S)=\lambda\leq c_0 n.
$$
By the definition of total variation distance,
\begin{align} \nonumber
d_{TV}(L\cap S,P\cap S)
& =
\sup_{\cA\subseteq \cF(S)}
\left| \,
\PP(L\cap S\in \cA)-\PP(P\cap S\in \cA) \,
\right| \\
& = \sup_{\cA\subseteq \cF(S)}
\left[ \, 
\PP(L\cap S\in \cA)-\PP(P\cap S\in \cA) \, 
\right], \label{eq_2018}
\end{align}
where the suprema run over all Borel subsets $\cA \subseteq \cF(S)$.
Note that in the second line we replaced the absolute value by brackets, which is legitimate
because replacing $\cA$ by $\cF(S)\setminus \cA$ changes the sign of the
expression inside the absolute value.
For $k\geq 0$, write $\cF_k(S)$ for the collection of all $k$-element subsets
of $S$. Set
$$
m=\lfloor n/10\rfloor+1
$$
and
$$
\cF_{\geq m}(S)=\bigcup_{\ell\geq m}\cF_\ell(S).
$$
Thus
$$
\cF(S)=
\left(\bigcup_{k=0}^{m-1}\cF_k(S)\right)
\cup
\cF_{\geq m}(S)
$$
is a disjoint union. 
Fix a Borel set $\cA\subseteq \cF(S)$ and write
$$
\cA_k=\cA\cap \cF_k(S),
\qquad
\cA_{\geq m}=\cA\cap \cF_{\geq m}(S).
$$
Then
\begin{align} \label{eq_2008}
&\PP(L\cap S\in \cA)-\PP(P\cap S\in \cA)
\\
&\qquad =
\left[
\PP(L\cap S\in \cA_{\geq m})
-
\PP(P\cap S\in \cA_{\geq m})
\right]
+
\sum_{k=0}^{m-1}
\left[
\PP(L\cap S\in \cA_k)
-
\PP(P\cap S\in \cA_k)
\right]. \nonumber 
\end{align}
For each $0\leq k\leq m-1$, Lemma \ref{lem_1233} yields
$$
\PP(L\cap S\in \cA_k)-\PP(P\cap S\in \cA_k)
\leq
C e^{-cn}.
$$
Since $m\leq n$, summing over $k=0,\ldots,m-1$ gives
\begin{equation}
\sum_{k=0}^{m-1}
\left[
\PP(L\cap S\in \cA_k)
-
\PP(P\cap S\in \cA_k)
\right]
\leq
C' e^{-c'n}.
\label{eq_2009} \end{equation}
It remains to bound the contribution of $\cA_{\geq m}$ to (\ref{eq_2008}). By Proposition
\ref{prop_1445}, with probability of at least $1-Ce^{-cn}$, the set $L\cap S$
has cardinality at most $n/10$. Since $m=\lfloor n/10\rfloor+1$, this implies
$$
\PP(L\cap S\in \cA_{\geq m})
\leq
\PP(|L\cap S|\geq m)
\leq
C e^{-cn}.
$$
Therefore
\begin{equation}
\PP(L\cap S\in \cA_{\geq m})
-
\PP(P\cap S\in \cA_{\geq m})
\leq
C e^{-cn}.
\label{eq_2010} \end{equation}
By combining (\ref{eq_2008}), (\ref{eq_2009}) and (\ref{eq_2010}) we conclude 
that for any Borel set $\cA\subseteq \cF(S)$,
$$
\PP(L\cap S\in \cA)-\PP(P\cap S\in \cA)
\leq
C e^{-c'n}.
$$ In view of (\ref{eq_2018}), this completes the proof. 
\end{proof}

\begin{remark} \begin{enumerate}
\item[(i)] Is it true that the universal constant $c$ from Theorem \ref{thm1} can be taken arbitrarily close to $1$?
This question is somewhat related to a remark in Venkatesh~\cite{ven}.

\item[(ii)] When $S \subseteq \RR^n$ is a Euclidean ball of volume $n / 5$, say, and $L \subseteq \RR^n$ is a random lattice of covolume one,
we currently do not know how to estimate
\begin{equation}  \left( \EE |L \cap S|^p \right)^{1/p} \label{eq_2015} \end{equation}
for all $1 \leq p < n$, up to a multiplicative universal constant. 
It seems that the bounds for (\ref{eq_2015}) that can be extracted from Schmidt \cite{schmidt2} could be 
 off by a factor of~$C^p$.
 The advantage of Schmidt's sieve over the usual inclusion-exclusion principle is that it allows us to avoid analyzing these high moments. 
\end{enumerate}
\end{remark}

\bigskip
\noindent School of Mathematical Sciences, Tel Aviv University, Tel Aviv 6997801, Israel; and
Department of Mathematics, Weizmann Institute of Science, Rehovot 7610001, Israel. \\
\textit{e-mail:} \texttt{klartagb@tau.ac.il}

\end{document}